\def\BibTeX{{\rm B\kern-.05em{\sc i\kern-.025em b}\kern-.08em
    T\kern-.1667em\lower.7ex\hbox{E}\kern-.125emX}}
\begin{document}

\title{Communication efficient quasi-Newton distributed optimization based on the Douglas-Rachford envelope}

\author{Dingran Yi and Nikolaos M. Freris\thanks{School of Computer Science, University of Science and Technology of China, Hefei, Anhui, 230027, China. Emails: \texttt{ ydr0826@mail.ustc.edu.cn, nfr@ustc.edu.cn}.}}

\maketitle

\begin{abstract}
We consider distributed optimization in the client-server setting. By use of Douglas-Rachford splitting to the dual of the sum problem, we design a BFGS method that requires minimal communication (sending/receiving one vector per round for each client). Our method is line search free and achieves superlinear convergence. Experiments are also used to demonstrate the merits in decreasing communication and computation costs.
\end{abstract}

\begin{IEEEkeywords}
distributed optimization, quasi-Newton, Douglas-Rachford splitting, superlinear convergence
\end{IEEEkeywords}

\section{Introduction}
Distributed optimization has received a lot of attention in signal processing, machine learning, and control. A common setting is a set of users with local data and computational capabilities coordinated by a server to
\begin{align}
    \underset{x\in\mathbb{R}^d}{\text{minimize}}\,\,\sum_{i=1}^m f_i(x)+\frac{\lambda}{2}\left\|x\right\|^2\label{pro1}
\end{align}where $f_i$ pertains to client~$i$ and $\frac{\lambda}{2}\left\|\cdot\right\|$ is a regularizer to reduce model complexity.\\There is a wealth of first order methods (e.g., gradient descent-based) \cite{jakovetic2014fast} developed for (1), due to their efficiency and simplicity. Nonetheless, they suffer from slow convergence. A natural choice to remedy this (that we also adopt in this paper) is to resort to second order, i.e., Newton’s method, which can achieve superlinear rate. Nonetheless, this poses serious challenges in terms of computation and communication costs. For second order methods, the server needs to compute the Newton direction. The issue is that this requires to communicate Hessian matrices in addition to gradients \cite{zhang2022distributed} ($\mathcal{O}(d^2)$ cost), which is unsuitable for high-dimensional problems. Several methods have been devised to alleviate this. In \cite{liu2023communication}, the server updates the aggregate Hessian over $d$ rounds during which clients communicate a single column of their local Hessian. However, the incurred delay is undesirable when $d>>1$. There are also methods that apply compression \cite{safaryan2022fednl,islamov2023distributed} and SVD decomposition \cite{agafonov2022flecs, dal2024shed} on local Hessians before transmitting to the server but they cannot guarantee that the cost can be reduced to $\mathcal{O}(d)$. Also SVD decomposition imposes additional computational overhead on the user devices. Another approach is where clients compute a direction locally and the server aggregates, with some criterion to ensure convergence: \cite{crane2020dino} considers the angle between local direction and aggregate gradient, while \cite{eisen2019primal} requires a series of additional communication exchanges per round (i.e., an inner loop). Nonetheless, only linear convergence can be established. In conclusion, the aforementioned methods require computing Hessian matrices which imposes substantial computational burden on user devices. Quasi-Newton (that approximates the curvature from gradients) can ease this problem to some extent \cite{eisen2017decentralized,soori2020dave}. However, these methods all consider local problems as individuals and then try to estimate the sum. It reflects
on the slow convergence rate or heavy communication cost. In this paper, we adopt a different approach. We consider the Douglas-Rachford (D-R) splitting \cite{eckstein1992douglas} on the dual problem of (1) and use the corresponding envelope function \cite{patrinos2014douglas} as the target of quasi-Newton optimization (based on the BFGS algorithm \cite[Chapter~6]{wright2006numerical}). This promotes a decoupling that is key to a communication-efficient distributed implementation. In specific, clients locally compute and communicate gradients pertaining to the envelope and the server performs the update. Under standard assumptions, we establish global convergence which superlinear rate while maintaining the communication cost at minimum (single upload/download of a vector of size $d$ per client for each round). Another novel contribution is that this is achieved without line search, the significance of which is elaborated next.\\Backtracking line search is indispensable to ensure global convergence (i.e., from an arbitrary starting point) with (asymptotic) quadratic/superlinear rate in Newton/quasi-Newton methods \cite{wright2006numerical}. In the distributed setting, this results in an `inner-loop’ of additional communication exchanges per round \cite{crane2020dino}: this not only incurs extra communication and computation burden, but also slows down the algorithm in terms of actual time. In this regard, \cite{liu2023communication,soori2020dave} establish convergence when initiating near optimality (so that line search is not needed and a unit step size can be used); this is quite a restrictive assumption in practical scenarios. \cite{du2024distributed} proposes a line search free BFGS method based on Hessian-vector products and greedy selection from base vectors. Nonetheless, in the distributed setting this would require to communicate approximated Hessian matrices. Besides, \cite{polyak2020new, zhang2022distributed} adopt an adaptive stepsize selection (in the place of line search) based on gradient norm. However, this is only applicable for (exact) Newton’s method (thus requires matrix exchanges; see also Sec. V). Our approach is more similar to the MBFGS in \cite{zhang2005globally,liu2010convergence} which checks the function value only once to determine the stepsize (thus the communication overhead is just one scalar). We go a step beyond this, by designing a new criterion that decreases the computation cost (see Sec. II.B and Fig. 2).\\
\textbf{Contributions}:
\begin{enumerate}
    \item We design a second order method for distributed optimization via applying BFGS on the Douglas-Rachford envelope of the dual problem. We demonstrate that it attains an efficient implementation with minimal communication costs per round.
    \item We establish global convergence with superlinear rate \emph{without line search}. This is key to communication efficiency and is attained by a new adaptive stepsize selection mechanism featuring low computational effort.
    \item Experiments demonstrate noticeable advantages in terms of communication and computation savings over leading baseline methods.
    \end{enumerate}
    \section{Algorithm Establishment}
    Problem (\ref{pro1}) can be  reformulated as
    \begin{align}
        \underset{x\in\mathbb{R}^{md},\theta\in\mathbb{R}^d}{\text{minimize}}&\,\,\,\,F(x)+\frac{\lambda}{2}\left\|\theta\right\|^2\notag\\
        \text{s.t.}&\,\,\,\,x_i-\theta=0,\,\,\, i=1,\hdots,m\label{prob2}
    \end{align}
    where $F(x)=\sum_{i=1}^m f_i(x_i)$ and $x=[x_1^\top,\hdots,x_m^\top]^\top$, We also define $\hat{x}:=\frac{1}{m}\sum_{j=1}^m x_j\in\mathbb{R}^{d}$ and $\bar{x}:=(\hat{x}, \hdots, \hat{x})\in\mathbb{R}^{md}$, with the same notation applying to averaging other user variables (at the server). In the following, we first show that solving the dual problem by means of quasi-Newton minimization of the Douglas-Rachford envelope admits an efficient distributed implementation, and proceed to discuss the rule for determining the stepsize so as to avoid additional communication/computation costs (pertaining to line search).
    \subsection{Distributed method based via BFGS on the dual problem}
    The dual of (2) is equivalent to the following problem: 
    \begin{align}
       \underset{y\in\mathbb{R}^{md}}{\text{minimize}}\,\,\,\,h_1(y)+h_2(y), \label{prob3}
    \end{align}where $h_1(y):=F^\star(-y), h_2(y):=\frac{1}{2\lambda}\left\|\sum_{i=1}^m y_i\right\|^2$ comes from the dual of quadratic (all norms are Euclidean in this paper). It follows from \cite{patrinos2014douglas} that this is equivalent to finding a stationary point of the \emph{Douglas-Rachford envelope}, solving  (\ref{prob3}) is equivalent to finding the stationary point of the following Douglas-Rachford envelope function
    \begin{align}
        H_\gamma^{\text{DR}}(y):=h_2^\gamma(y)-\gamma\left\|\nabla h_2^\gamma(y)\right\|^2+h_1^\gamma(y-2\gamma\nabla h_2^\gamma(y)),\label{DRE}
    \end{align}where $h^\gamma(y):=\underset{z}{\text{inf}}\left\{h(z)+\frac{1}{2\gamma}\left\|y-z\right\|^2\right\}$. Note that using gradient descent to minimize (\ref{DRE}) gives rise to the celebrated ADMM method \cite{patrinos2014douglas}. Thus, ADMM is a first order method that can only attain linear convergence (this can also be understood by the fact that the dual update is carried via gradient ascent). In this paper, we consider second order acceleration by means of applying a quasi-Newton method for minimizing the envelope function based on gradient evaluation. The latter can be calculated as:
    \begin{align*}
        \nabla H_\gamma^{\text{DR}}(y)=\frac{1}{\gamma}\left(I-2\tau Q\right)\left(y-\tau\bar{y}-\text{prox}_{\gamma h_1}(y-2\tau\bar{y})\right),
    \end{align*}
     where $Q:=\frac{1}{m}\textbf{1}_m\textbf{1}_m^\top\otimes I_d\in\mathbb{R}^{md\times md}$ is the averaging matrix and $\tau:=\frac{m\gamma}{m\gamma+\lambda}$. Distributed implementation is possible in view of the fact that $\text{prox}_{\gamma h_1}(y-2\tau\bar{y})$ can be computed at the server based on local computations carried (in parallel) at the users, For
     \begin{align}
         x_i=\underset{x_i}{\text{argmin}}\left\{f_i(x_i)+\left(y_i-2\tau\hat{y}\right)^\top x_i+\frac{\gamma}{2}\left\|x_i\right\|^2\right\},\label{localcomp}
     \end{align}which are intrepreted as \emph{primal variables} (i.e., the local models in (\ref{prob2})), $\text{prox}_{\gamma h_1}(y-2\tau\bar{y})=y-2\tau\bar{y}+\gamma x$. To conclude:$$\nabla H_\gamma^{\text{DR}}(y)=\left(\frac{\tau}{\gamma}-\frac{2\tau^2}{\gamma}\right)\bar{y}-x+2\tau\bar{x}.$$Moreover, we change the order of the three steps of BFGS update for a consistent description of our method:
     \begin{subequations}
     \begin{align}
     B_{k}^{-1}&=B_{k-1}^{-1}+\frac{(s_{k-1}^\top z_{k-1}+z_{k-1}^\top B_{k-1}^{-1}z_{k-1})(s_{k-1}s_{k-1}^\top)}{(s_{k-1}^\top z_{k-1})^2}\notag\\&\,\,\,\,\,\,\,-\frac{B_{k-1}^{-1}z_{k-1}s_{k-1}^\top+s_{k-1}z_{k-1}^\top B_{k-1}^{-1}}{s_{k-1}^\top z_{k-1}},\label{step3}\\
         p^k&=B_k^{-1}\nabla H_\gamma^{\text{DR}}(y^k),\label{step1}\\
         y^{k+1}&=y^k-\eta^kp^k,\label{step2}
     \end{align}
     \end{subequations}
     with $s_k=y^{k+1}-y^k, z_k=\nabla H_\gamma^{\text{DR}}(y^{k+1})-\nabla H_\gamma^{\text{DR}}(y^{k})$. Here $B_k^{-1}$ is to approximate the inverse of Hessian and $p^k$ represents the update direction. Note that to initialize the iteration, we need two different variables $y^0$, $y^1$ ($y^1$ doesn't need to be determined from $y^0$) and clients run (\ref{localcomp}) so that server can get $\nabla H_\gamma^{\text{DR}}(y^0)$ and $\nabla H_\gamma^{\text{DR}}(y^1)$; also client~$i$ needs to store $y_i^1-2\tau\hat{y}_i^1$ for later update.\subsection{Stepsize selection}
In BFGS, superlinear convergence is asymptotic (for some neighborhood of the optimal solution, where a unit stepsize can be used). For global convergence, backtracking line search based on evaluating the function value is rudimentary. In our setting, this is highly unattractive as it would require to run (\ref{localcomp}) multiple times, thus inducing delay and additional communication and computation costs. To remedy this, we devise a new adaptive stepsize selection mechanism with minimal costs and represent it in Alg.~1. Given a constant $\sigma\in\left(0,\frac{1}{2}\right)$, at the beginning of round~$k$, the server calculates~(line~2):
\begin{align}
  q_{k-1}=&\frac{\left\|s_{k-1}-B_{k-1}^{-1}z_{k-1}\right\|}{\left\|B_{k-1}^{-1}s_{k-1}\right\|}+\frac{1}{\gamma}\left\|\eta^{k-1}p^{k-1}\right\|\notag\\&+\left\|\nabla H_\gamma^{\text{DR}}(y^{k-1})\right\|,\label{var_c}  
\end{align}
In round~$k$ after step (\ref{step1}), server decides stepsize with the two conditions:
\begin{align*}
    q_{k-1}&\geq \frac{(1-2\sigma)(p^k)^\top\nabla H_\gamma^{\text{DR}}(y^k)}{4\left\|p^k\right\|^2},\tag{$\mathcal{A}$}\label{condition1}\\
    H_\gamma^{\text{DR}}(y^k-p^k)&\leq H_\gamma^{\text{DR}}(y^k)-\sigma (p^k)^\top\nabla H_\gamma^{\text{DR}}(y^k).\tag{$\mathcal{B}$}\label{condition2}
\end{align*}Based on this rule we can establish that
\begin{align*}
    H_\gamma^{\text{DR}}(y^k)=&\frac{m\lambda^2-m^2\gamma^2}{2\gamma(m\gamma+\lambda)^2}\left\|\bar{y}^k\right\|^2+\frac{\gamma}{2}\left\|x^k\right\|^2-F(x^k)\\&-(x^k)^\top(y^k-\frac{2m\gamma}{m\gamma+\lambda}\bar{y}^k+\gamma x^k).
\end{align*}And we define $v_i^k=-\frac{\gamma}{2}\left\|x_i^k\right\|^2-f_i(x_i^k)-(x_i^k)^\top(y_i^k-2\tau\hat{y}^k)$ to represent client~$i$'s part of the formula above. Our mechanism can be summarized to be: 
if \ref{condition1} $\lor ^\neg$\ref{condition2}, let the stepsize $\eta^k$ be $\frac{\delta (p^k)^\top \nabla H_\gamma^{\text{DR}}(y^k)}{\left\|p^k\right\|^2}$. Otherwise, the server takes unit stepsize. To be more specific, the server first check whether \ref{condition1} holds, if it does, we have determined that $y^{k+1}=y^k-\frac{\delta (p^k)^\top \nabla H_\gamma^{\text{DR}}(y^k)}{\left\|p^k\right\|^2}p^k$. To obtain $\nabla H_\gamma^{\text{DR}}(y^{k+1})$, instead of sending $y_i^{k+1}-2\tau\hat{y}^{k+1}$ to client to implement (\ref{localcomp}), we only use the difference, i.e., $\Delta_i^k:=\eta^k(p_i^k-2\tau\hat{p}^k)$ (line~6). If \ref{condition1} doesn't hold, server further checks whether \ref{condition2} holds, for which we first need try whether unit stepsize can be taken (line~9). In this case, client only sends back $v_i^{k+1}$ first (line~10) to save communication cost. If \ref{condition2} holds, client doesn't need extra computation. If not, client implements (\ref{localcomp}) one more time to obtain $x_i^{k+1}$ (line~16). From this we can also see sending $\Delta_i^k$ (line~9) is communication saving, because if \ref{condition2} doesn't hold, server just needs to send one more scalar (line~15).\\
\begin{algorithm}[t]
	\caption{QND2R (server view)} 
 \textbf{initialization}: $y^0, y^1,\nabla H_\gamma^{\text{DR}}(y^0),\nabla H_\gamma^{\text{DR}}(y^0), B_0$
	\begin{algorithmic}[1]
		\For {$k=1,2,3,\hdots$}
  \State{calculate $q_{k-1}$ based on (\ref{var_c})}
\State{update $B_k^{-1}$ and $p^k$ based on (\ref{step3}) and (\ref{step1})}
  \State{set $\eta^{k}=\frac{\delta \left(p^{k}\right)^\top \nabla H_\gamma^{\text{DR}}(y^k)}{\left\|p^{k}\right\|^2}$}
  \If {\ref{condition1}}
  \State{\underline{send} \ref{condition1} and $\Delta_i^k=\eta^k\left(p_i^{k}-2\tau\hat{p}_i^{k}\right)$ to run Alg.~2}
  \State{\underline{receive} $\left\{x_i^{k+1}, v_i^{k+1}\right\}$, go to line~18}
  \Else
  \State{\underline{send} \ref{condition1} and $\Delta_i^k=p_i^{k}-2\tau\hat{p}_i^{k}$ to run Alg.~2}
  \State{\underline{receive} $v_i^{k+1}$}
  \EndIf
  \If {\ref{condition2}}
  \State{\underline{send} \ref{condition2} and \underline{receive} $x_i$, $\eta^k=1$}
  \Else
  \State{\underline{send} \ref{condition2} and $\eta^k$}
  \State{discard $v_i^{k+1}$ in line~10 and \underline{receive} $\left\{x_i^{k+1},v_i^{k+1}\right\}$}
  \EndIf
 \State{$y^{k+1}=y^{k}-\eta^{k}p^{k}$}
 \State{$H_\gamma^{\text{DR}}(y^{k+1})=\frac{m\lambda^2-m^2\gamma^2}{2\gamma(m\gamma+\lambda)^2}\left\|\bar{y}^{k+1}\right\|^2+\sum_{i=1}^m v_i^{k+1}$}
 \State{$\nabla H_\gamma^{\text{DR}}(y^{k+1})=\frac{m\lambda-m^2\gamma}{(m\gamma+\lambda)^2}\bar{y}^{k+1}-x^{k+1}+\frac{2m\gamma}{m\gamma+\lambda}\bar{x}^{k+1}$}
  
  \EndFor

	\end{algorithmic} 
\end{algorithm}
We design in this way because if we only have condition \ref{condition2} and find it doesn't hold, (\ref{localcomp}) needs to be implemented one more time and thus increase computation cost. We tend to establish another condition, which should be easy to check and can indicate whether \ref{condition2} holds to some extend. It can be understood that BFGS enjoys superliear since as the iteration goes, the variable is close to minima and the approximated matrix is close to the Hessian. Recall the expression of $q$ in (\ref{var_c}), the first part measures how close the approximated matrix is to Hessian and the second part represent the distance between variable and minima. However, when we are determining $\eta^k$, $q_k$ is not known yet. We use $q_{k-1}$ to estimate and when it's small enough (i.e., $^\neg$\ref{condition1}), it is very likely that \ref{condition2} holds. Therefore, only $^\neg$\ref{condition1}, we check (\ref{condition2}) and thus save the computation cost. We summarize the steps in Algorithm QND2R (\textbf{Q}uasi \textbf{N}ewton \textbf{D}istributed \textbf{D}ouglas \textbf{R}achford). 
    
\begin{algorithm}[t]
	\caption{(client~$i$)} 
 \textbf{initialization}: $u_i:=y_i^1-\frac{2m\gamma}{m\gamma+\lambda}\bar{y}_i^1$
	\begin{algorithmic}[1]
		
  \State{\underline{receive} input \ref{condition1} and $\Delta_i^k$ from server}
  \State{compute $x_i^{k+1}=\underset{x_i}{\text{argmin}}\left\{f_i(x_i)+(u_i-\Delta_i^k)^\top x_i+\frac{\gamma}{2}\left\|x_i\right\|^2\right\}$}
  \State{$v_i^{k+1}=-\frac{\gamma}{2}\left\|x_i^k\right\|^2-f_i(x_i^k)-(x_i^k)^\top(u_i-\Delta_i^k)$}
  \If{\ref{condition1}}
  \State{\underline{send} $\left\{x_i^{k+1}, v_i^{k+1}\right\}$, let $u_i=u_i-\Delta_i^k$, go to line~16}
  
  \Else
  \State{\underline{send} $v_i^{k+1}$ and \underline{receive} \ref{condition2}}
  \If{\ref{condition2}}
  \State{\underline{send} $x_i$, let $u_i=u_i-\Delta_i^k$}
  \Else
  \State{\underline{receive} $\eta^k$ and compute}
  \Statex{\qquad\,\,\,\,\,$x_i^{k+1}=\underset{x_i}{\text{argmin}}\left\{f_i(x_i)+(u_i-\eta^k\Delta_i^k)^\top x_i+\frac{\gamma}{2}\left\|x_i\right\|^2\right\}$}
  \State{$v_i^{k+1}=-\frac{\gamma}{2}\left\|x_i^{k+1}\right\|^2-f_i(x_i^{k+1})-x_i^\top(u_i-\eta^k\Delta_i^k)$}
  \State{\underline{send} $\left\{x_i^{k+1}, v_i^{k+1}\right\}$, let $u_i=u_i-\eta^k\Delta_i^k$}
  \EndIf
  \EndIf
  \State{exit}


	\end{algorithmic} 
\end{algorithm}
\section{convergence analysis}
    Our analysis is based on the following.
    \newtheorem{theorem}{Theorem}
    \newtheorem{lemma}{Lemma}
    \newtheorem{property}{Property}
    \newtheorem{assumption}{Assumption}
    \newtheorem{remark}{Remark}
    
    \begin{assumption}\label{ass1}
        Each $f_i$ is twice differentiable, strongly convex, with Lipschitz continuous gradient, i.e., there exist $0\le L_1\le L_2$ s.t. for $i=1,\hdots,m,$ $$L_1I_d\preceq \nabla^2f_i(x)\preceq L_2I_d,\,\,\,\, x\in\mathbb{R}^d.$$
    \end{assumption}
    \begin{assumption}\label{ass2}
        The Hessian of $f_i$ is Lipschitz continuous, i.e., there exists constant $L_3$, s.t. for $i=1,\hdots,m,$$$\left\|\nabla^2f_i(x)-\nabla^2f_i(y)\right\|\leq L_3\left\|x-y\right\|,\,\,\,\,x,y\in\mathbb{R}^{d}.$$
    \end{assumption}
    \begin{property}
        Under Assumption~1, for given $y$ and $x$ obtained from (\ref{localcomp}), we have $\left\|x-\bar{x}\right\|^2+\left\|\sum_{i=1}^m\left(\nabla f_i(x_i)+\frac{\lambda}{m}x_i\right)\right\|^2\leq\left(\frac{\gamma^2}{\tau^2}+\gamma^2+1\right)\left\|\nabla H_\gamma^{\text{DR}}(y)\right\|^2$. Further, suppose that $y^\star$ is the stationary point of (\ref{DRE}), the corresponding $x^\star$ from satisfies $x_1^\star=\cdots=x_m^\star$ and each $x_i^\star$ solves (\ref{pro1}). 
    \end{property}
    \begin{proof}
    Since $\nabla H_\gamma^{\text{DR}}(y)=(\tau/\gamma-2\tau^2/\gamma)\bar{y}-x+2\tau\bar{x}$ and $\bar{y}=\textbf{1}_m\otimes\hat{y}$, $\bar{x}=\textbf{1}_m\otimes\hat{x}$, we have $\left\|x-\bar{x}\right\|^2\leq\left\|\nabla H_\gamma^{\text{DR}}(y)\right\|$. From (\ref{localcomp}) we obtain $\nabla f_i(x_i)+y_i-2\tau\hat{y}+\gamma x_i=0$, therefore, adding from $1$ to $m$ over $i$ and substitute $\tau$ with $\frac{m\gamma}{m\gamma+\lambda}$ we have
    \begin{align*}
        0=&\sum_{i=1}^m\left(\nabla f_i(x_i)+y_i-2\tau\hat{y}+\gamma x_i\right)\\=&m\hat{y}-2m\tau\hat{y}+m\gamma\hat{x}+\sum_{i=1}^m\nabla f_i(x_i)\\=&\frac{m\gamma+\lambda}{m}\sum_{i=1}^m\left(\left[\nabla H_\gamma^{\text{DR}}(y)\right]_i+x_i\right)-m\gamma\hat{x}+\sum_{i=1}^m\nabla f_i(x_i),
    \end{align*}where $\left[\nabla H_\gamma^{\text{DR}}(y)\right]_i$ means a sub-vector from entry $id-d+1$ to $id$. Therefore,
    \begin{align*}
     &\left\|\sum_{i=1}^m\left(\nabla f_i(x_i)+\frac{\lambda}{m}x_i\right)\right\|^2\\\leq&\frac{(m\gamma+\lambda)^2}{m^2}\left\|\nabla H_\gamma^{\text{DR}}(y)\right\|^2+\gamma^2\left\|x-\bar{x}\right\|^2\\\leq&\left(\frac{\gamma^2}{\tau^2}+\gamma^2\right) \left\|\nabla H_\gamma^{\text{DR}}(y)\right\|^2.  
    \end{align*}
     If $\nabla H_\gamma^{\text{DR}}(y)=0$, we have $x=\bar{x}$, i.e., $x_1=\cdots=x_m$. Meanwhile we have $\sum_{i=1}^m\left(\nabla f_i(x_i)+\frac{\lambda}{m}x_i\right)=0$ so each $x_i$ solves (\ref{pro1}). 
    \end{proof}
    \begin{property}
    Under Assumptions 1 and 2, $f_i^\star$ is strongly convex with Lipschitz continuous gradient with parameters $\frac{1}{L_2}$ and $\frac{1}{L_1}$. Meanwhile, $\nabla^2f^\star_i$ exists and is continuous with parameter~$\frac{L_3}{L_1^3}$
    \end{property}
    \begin{proof}
        The first part can be directly obtained from \cite{zhou2018fenchel}. $f_i$ is closed and  strongly convex, for any $y$, there exists a unique $x$ such that $y=f_i(x)$, therefore, $\nabla f^\star_i(y)=x$, which means the gradient of $f_i^\star$ exists. For $x_0$ and any sequence $\left\{x_j\right\}$ that converge to $x_0$, we have (assume that $x_j\neq x_0$ for any $j$) $$\frac{\left\|\nabla^2 f_i(x_0)(x_j-x_0)-(\nabla f_i(x)-\nabla f_j(x_0))\right\|}{\left\|x_j-x_0\right\|}\rightarrow 0.$$For $y_0=\nabla f_i(x_0)$, and the sequence $\left\{y_j\right\}$ such that $y_j=\nabla f_i(x_j)$, we have
        $$\frac{\left\|\nabla^2f_i(x_0)\left(\nabla f_i^\star(y_j)-\nabla f_i^\star(y_0)\right)-(y_j-y_0)\right\|}{\left\|x_j-x_0\right\|}\rightarrow 0.$$$\nabla^2f_i(\cdot)$ is upper and lower bounded, so$$\frac{\left\|\left(\nabla f_i^\star(y_j)-\nabla f_i^\star(y_0)\right)-\left(\nabla^2f_i(x_0)\right)^{-1}(y_j-y_0)\right\|}{\left\|y_j-y_0\right\|}\rightarrow 0,$$which means $\nabla^2 f_i^\star(y_0)=\left(\nabla^2 f_i(x_0)\right)^{-1}$.
        For any $x_0$ and $x_1$, since
        $
            \left\|I-(\nabla^2f_i(x_1))^{-1}\nabla^2f_i(x_0)\right\|\leq\frac{L_3}{L_1}\left\|x_1-x_0\right\|
        $, we have $\left\|(\nabla^2f_i(x_0))^{-1}-(\nabla^2f_i(x_1))^{-1}\right\|\leq \frac{L_3}{L_1^2}\left\|x_1-x_0\right\|$. Substituting with conjugate function and $y$ variables, we obtain $\left\|\nabla^2f_i^\star(y_0)-\nabla^2f_i^\star(y_1)\right\|\leq \frac{L_3}{L_1^3}\left\|y_1-y_0\right\|$.
    \end{proof}
    Then we show the properties of the envelope function $H_\gamma^{\text{DR}}$ defined in (\ref{DRE}).
    \begin{lemma}
        Under Assumptions~\ref{ass1} and \ref{ass2}, by choosing $\gamma=\frac{\lambda}{3m}$, $H_\gamma^{\text{DR}}$ is strongly convex with Lipschitz continuous gradient with parameters $\min\left\{\frac{1}{8\gamma},\frac{1}{L_2+\gamma}\right\}$ and $\frac{1}{\gamma}$ respectively. Meanwhile the Hessian is continuous with parameter $\frac{L_3L_2^3}{L_1^3}$.
    \end{lemma}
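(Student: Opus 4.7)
\medskip

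The plan is to obtain an explicit expression for $\nabla^2 H_\gamma^{\text{DR}}$ by direct differentiation of (\ref{DRE}), then deduce all three bounds by spectral analysis together with the bounds on $h_1^\gamma$ inherited from Property~2. First I would exploit the quadratic structure of $h_2$: a direct calculation gives $\nabla h_2^\gamma(y)=\frac{\tau}{\gamma}\bar y$ and $\nabla^2 h_2^\gamma=\frac{\tau}{\gamma}Q$. Setting $W:=I-2\tau Q$, the definition (\ref{DRE}) rewrites as $H_\gamma^{\text{DR}}(y)=h_2^\gamma(y)-\gamma\|\nabla h_2^\gamma(y)\|^2+h_1^\gamma(Wy)$. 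Differentiating yields
\begin{align*}
\nabla H_\gamma^{\text{DR}}(y)&=W\nabla h_2^\gamma(y)+W\nabla h_1^\gamma(Wy),\\
\nabla^2 H_\gamma^{\text{DR}}(y)&=\tfrac{\tau(1-2\tau)}{\gamma}\,Q+W\,\nabla^2 h_1^\gamma(Wy)\,W.
\end{align*}
With $\gamma=\lambda/(3m)$ one gets $\tau=1/4$, so the constant term becomes $\frac{1}{8\gamma}Q$. Crucially, since $Q$ is an orthogonal projector, $W$ is symmetric with eigenvalue $1$ on $\mathrm{Null}(Q)$ and $\tfrac12$ on $\mathrm{Range}(Q)$.

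Next I would translate Property~2 into bounds on $\nabla^2 h_1^\gamma$. Because $h_1(y)=F^\star(-y)$ is separable and $\frac{1}{L_2}$-strongly convex with $\frac{1}{L_1}$-Lipschitz gradient, the standard Moreau-envelope identities give $\frac{1}{L_2+\gamma}I\preceq\nabla^2 h_1^\gamma(w)\preceq\frac{1}{\gamma}I$ for all $w$. With this in hand, I would bound the quadratic form $u^\top\nabla^2 H_\gamma^{\text{DR}}(y)u$ by decomposing $u=u_\parallel+u_\perp$ along $\mathrm{Range}(Q)\oplus\mathrm{Null}(Q)$. Then $Wu=\tfrac12 u_\parallel+u_\perp$ and $u^\top Qu=\|u_\parallel\|^2$, so
\begin{align*}
u^\top\nabla^2 H_\gamma^{\text{DR}}(y)u=\tfrac{1}{8\gamma}\|u_\parallel\|^2+(Wu)^\top\nabla^2 h_1^\gamma(Wy)(Wu).
\end{align*}
The upper estimate $\frac{1}{8\gamma}\|u_\parallel\|^2+\frac{1}{\gamma}(\tfrac14\|u_\parallel\|^2+\|u_\perp\|^2)=\tfrac{3}{8\gamma}\|u_\parallel\|^2+\tfrac{1}{\gamma}\|u_\perp\|^2\le\tfrac{1}{\gamma}\|u\|^2$ yields the Lipschitz-gradient constant $\tfrac{1}{\gamma}$. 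The lower estimate $\tfrac{1}{8\gamma}\|u_\parallel\|^2+\tfrac{1}{L_2+\gamma}\|u_\perp\|^2\ge\min\!\bigl\{\tfrac{1}{8\gamma},\tfrac{1}{L_2+\gamma}\bigr\}\|u\|^2$ delivers the strong-convexity constant.

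For Hessian Lipschitz continuity, the $\frac{1}{8\gamma}Q$ summand is constant, so I only need to control $\|W[\nabla^2 h_1^\gamma(Wy_1)-\nabla^2 h_1^\gamma(Wy_2)]W\|$. Using $z_i:=\mathrm{prox}_{\gamma h_1}(w_i)$ with $w_i=Wy_i$, the identity $\nabla^2 h_1^\gamma(w)=\nabla^2 h_1(z)(I+\gamma\nabla^2 h_1(z))^{-1}$ and the algebraic manipulation $H_1A_1^{-1}-H_2A_2^{-1}=A_2^{-1}(H_1-H_2)A_1^{-1}$ (with $A_i=I+\gamma\nabla^2 h_1(z_i)$) reduce the task to combining three contractive factors: $\|A_i^{-1}\|\le\tfrac{L_2}{L_2+\gamma}$, the Lipschitz constant $L_3/L_1^3$ of $\nabla^2 h_1$ from Property~2, and the Lipschitz constant $\tfrac{L_2}{L_2+\gamma}$ of $\mathrm{prox}_{\gamma h_1}$. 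Multiplying these and using $\|W\|\le 1$ gives the claimed bound $L_3L_2^3/L_1^3$.

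I expect the main technical obstacle to be the Hessian-Lipschitz part: one must keep careful track of how the Moreau envelope composes with the prox map, and verify that the separable structure of $h_1$ propagates the per-block bounds of Property~2 to a global operator-norm bound. The spectral splits used for strong convexity and smoothness are comparatively routine once the explicit formula for $\nabla^2 H_\gamma^{\text{DR}}$ is written down.
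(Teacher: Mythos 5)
Your proposal is correct and follows essentially the same route as the paper: an explicit formula for $\nabla^2 H_\gamma^{\text{DR}}$ (yours, $\frac{\tau(1-2\tau)}{\gamma}Q+W\nabla^2 h_1^\gamma(Wy)W$, is algebraically identical to the paper's resolvent form), spectral bounds on the envelope/resolvent term with $\tau=\tfrac14$ to get the constants $\min\{\tfrac{1}{8\gamma},\tfrac{1}{L_2+\gamma}\}$ and $\tfrac1\gamma$, and the resolvent-difference plus prox-Lipschitz argument (the paper's ``same trick as Property~2'') for Hessian continuity. The only caveat is cosmetic: your derivation actually yields the factor $\frac{L_3}{L_1^3}\bigl(\frac{L_2}{L_2+\gamma}\bigr)^3$, which coincides with the stated $\frac{L_3L_2^3}{L_1^3}$ only up to the same looseness in constants that the paper's own terse argument exhibits.
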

    \begin{proof}
    Based on Property~2, we can further calculate that 
        \begin{align*}
            &\nabla^2H_\gamma^{\text{DR}}(y)=\\&\gamma^{-1}(I-2\tau Q)\left((I-\tau Q)-\left(I+\gamma\nabla^2h_1(y)\right)^{-1}(I-2\tau Q)\right).
        \end{align*}Since $h_1(\cdot)$ is continuous with parameter $\frac{L_3}{L_1^3}$, with the same trick used in Property~2, we obtain $\nabla^2H_\gamma^{\text{DR}}(y)$ is continuous with parameter $\frac{L_3L_2^3}{L_1^3}$. Next, we calculate $z^\top\nabla^2H_\gamma^{\text{DR}}(y)z$ to show that $\nabla^2H_\gamma^{\text{DR}}(y)$ is uniformly lower and upper bounded. 
        \begin{align*}
            &\gamma z^\top\nabla^2H_\gamma^{\text{DR}}(y)z\\=&z^\top z-\left(3\tau-2\tau^2\right)\bar{z}^\top\bar{z}\\&-(z-2\tau\bar{x})^\top\left(I+\gamma\nabla^2h_1(y)\right)^{-1}(z-2\tau\bar{x}).
        \end{align*}
        Since $\frac{1}{1+\gamma/L_1}I\preceq\left(I+\gamma\nabla^2h_1(y)\right)^{-1}\preceq\frac{1}{1+\gamma/L_2}I$ and by choosing $\gamma=\frac{\lambda}{3m}$, $\tau=\frac{m\gamma}{m\gamma+\lambda}=\frac{1}{4}$, we have
        \begin{align*}
            \gamma z^\top\nabla^2H_\gamma^{\text{DR}}(y)z\leq z^\top z-\frac{5}{8}\bar{z}^\top\bar{z}-\frac{z^\top z-\frac{3}{4}\bar{z}^\top\bar{z}}{1+\gamma/L_1}\leq z^\top z,
        \end{align*}
        which means $\nabla^2H_\gamma^{\text{DR}}(y)$ is upper bounded by $\frac{1}{\gamma}I$. Similarly, we obtain
        \begin{align*}
            \gamma z^\top\nabla^2H_\gamma^{\text{DR}}(y)z\geq& z^\top z-\frac{5}{8}\bar{z}^\top\bar{z}-\frac{z^\top z-\frac{3}{4}\bar{z}^\top\bar{z}}{1+\gamma/L_2}\\\geq&\min\left\{\frac{1}{8},\frac{\gamma}{L_2+\gamma}\right\}\bar{z}^\top\bar{z},
        \end{align*}
        so $\nabla^2H_\gamma^{\text{DR}}(y)$ is lower bounded by $\min\left\{\frac{1}{8\gamma},\frac{1}{L_2+\gamma}\right\}I$.
    \end{proof}
        A positive attribute of this analysis is that the hyperparameter can be easily selected without any loss on properties of the individual loss functions (i.e., using only $\lambda,m$ that are directly accessible by the server).
\begin{theorem}
    Under Assumptions~1 and 2, by choosing $\gamma=\frac{\lambda}{3m}$,$\sigma\in\left(0,\frac{1}{2}\right),\delta\in(0,\frac{\lambda}{3m})$, and $B_0$ to be some positive definite matrix, the sequence $\left\{y^k\right\}$ generated by Alg.~1 converges superlinearly to the unique minima $y^\star$ of (\ref{DRE}). And for large enough $k$, unit stepsize is always taken.
\end{theorem}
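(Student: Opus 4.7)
The plan is to lean on Lemma~1 (strong convexity and $1/\gamma$-Lipschitz gradient of $H_\gamma^{\text{DR}}$, plus a Lipschitz Hessian) and adapt the classical BFGS program to the non-standard stepsize rule. I would organize the proof in three stages: (i) a uniform sufficient-decrease estimate implying $\nabla H_\gamma^{\text{DR}}(y^k)\to 0$ and hence $y^k\to y^\star$; (ii) verification that $\mathcal{A}$ eventually fails while $\mathcal{B}$ accepts $\eta^k=1$; (iii) the Dennis--Mor\'e superlinear rate. Strong convexity guarantees $s_{k-1}^\top z_{k-1}\ge \mu\|s_{k-1}\|^2>0$, so the update (\ref{step3}) preserves $B_k\succ 0$; the standard Powell/Byrd--Nocedal trace-determinant argument (adapted as in the MBFGS analysis of \cite{zhang2005globally,liu2010convergence}) then confines the eigenvalues of $B_k^{-1}$ to a compact interval $[a,b]\subset(0,\infty)$ on a subsequence of positive density, which ultimately suffices to drive the whole sequence.

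For global convergence I would case on which branch of Alg.~1 fires. When $\mathcal{A}$ holds and $\eta^k=\delta(p^k)^\top\nabla H_\gamma^{\text{DR}}(y^k)/\|p^k\|^2$, the descent lemma for the $1/\gamma$-Lipschitz gradient yields decrease $(\delta-\delta^2/(2\gamma))((p^k)^\top\nabla H_\gamma^{\text{DR}}(y^k))^2/\|p^k\|^2$, with a strictly positive coefficient since $\delta<\gamma$. When $\mathcal{A}$ fails and $\mathcal{B}$ accepts the unit stepsize, $\mathcal{B}$ directly furnishes decrease $\sigma(p^k)^\top\nabla H_\gamma^{\text{DR}}(y^k)$. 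When both fail, Alg.~1 reverts to the short stepsize and the first estimate applies. Using the lower bound $(p^k)^\top\nabla H_\gamma^{\text{DR}}(y^k)/\|p^k\|^2\ge a/b^2$ on the well-conditioned subsequence, telescoping $H_\gamma^{\text{DR}}(y^k)$ yields $\nabla H_\gamma^{\text{DR}}(y^k)\to 0$, and strong convexity gives $y^k\to y^\star$.

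For the unit-step regime and the rate, once $y^k\to y^\star$ the Dennis--Mor\'e analysis (using Lipschitz Hessian and boundedness of $B_k^{-1}$) forces $\|s_{k-1}-B_{k-1}^{-1}z_{k-1}\|/\|B_{k-1}^{-1}s_{k-1}\|\to 0$; combined with $\|\eta^{k-1}p^{k-1}\|\to 0$ and $\|\nabla H_\gamma^{\text{DR}}(y^{k-1})\|\to 0$ this yields $q_{k-1}\to 0$, while the right-hand side of $\mathcal{A}$ stays bounded below by $(1-2\sigma)a/(4b^2)$ in the limit, so $\mathcal{A}$ must fail for all large $k$. A Taylor expansion with Lipschitz Hessian gives $H_\gamma^{\text{DR}}(y^k-p^k)-H_\gamma^{\text{DR}}(y^k)=-(p^k)^\top\nabla H_\gamma^{\text{DR}}(y^k)+\tfrac12 (p^k)^\top\nabla^2 H_\gamma^{\text{DR}}(y^\star)p^k+o(\|p^k\|^2)$, and Dennis--Mor\'e reduces the quadratic term to $\tfrac12(p^k)^\top\nabla H_\gamma^{\text{DR}}(y^k)+o((p^k)^\top\nabla H_\gamma^{\text{DR}}(y^k))$, so $\sigma<\tfrac12$ ensures that $\mathcal{B}$ accepts $\eta^k=1$ for all large $k$. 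The classical Dennis--Mor\'e--Powell theorem then delivers the superlinear rate.

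The hard part will be calibrating $\mathcal{A}$ quantitatively: showing that $q_{k-1}$ decays strictly faster than the ratio $(p^k)^\top\nabla H_\gamma^{\text{DR}}(y^k)/\|p^k\|^2$, so that the short-stepsize branch relinquishes control in time for the unit-step regime to set in and secure superlinear rate. A secondary subtlety is upgrading the subsequence spectral bound on $B_k^{-1}$ into a per-iterate descent inequality (rather than just along a subsequence), which is where the detailed structure of our stepsize rule — unlike Wolfe line search — must be exploited carefully.
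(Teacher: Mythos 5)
Your proposal follows essentially the same route as the paper's proof: positive definiteness of $B_k$, monotone decrease of $H_\gamma^{\text{DR}}(y^k)$ via the Lipschitz-gradient estimate with $\delta<\gamma$ (and condition~$\mathcal{B}$ on unit steps), the Byrd--Nocedal bounded-spectrum subsequence argument to drive $\nabla H_\gamma^{\text{DR}}(y^k)\to 0$, and then the Dennis--Mor\'e/Byrd--Nocedal machinery to show $q_k\to 0$ so that $\mathcal{A}$ eventually fails while a Taylor expansion with $\sigma<\tfrac12$ makes $\mathcal{B}$ accept the unit step, giving the superlinear rate. The only remark worth making is that your closing worry about ``calibrating $\mathcal{A}$ quantitatively'' is unnecessary: as you yourself note, the uniform spectral bounds on $B_k$ keep $(p^k)^\top\nabla H_\gamma^{\text{DR}}(y^k)/\left\|p^k\right\|^2$ bounded away from zero, so $q_{k-1}\to 0$ alone defeats $\mathcal{A}$, which is exactly the paper's argument.
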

\begin{proof}
    It follows from (\ref{step3}) that $B_k$ is positive definite for all $k$. We proceed to establish that $H_\gamma^{\text{DR}}(y^k)$ is a decreasing sequence. If $\eta^k=\frac{\delta(p^k)^\top\nabla H_\gamma^\text{DR}(y^k)}{\left\|p^k\right\|^2}$, we have
    \begin{align*}
        &H_\gamma^{\text{DR}}(y^{k+1})-H_\gamma^{\text{DR}}(y^{k})=\nabla H_\gamma^\text{DR}(\xi^k)^\top s_k\\\leq& \nabla H_\gamma^\text{DR}(y^k)^\top s_k+\left\|\nabla H_\gamma^\text{DR}(\xi^k)-\nabla H_\gamma^\text{DR}(y^k)\right\|\left\|s_k\right\|\\\leq&(\eta^k)^2\left\|p^k\right\|^2-\eta^k\nabla H_\gamma^\text{DR}(y^k)^\top p^k\leq(\delta-1)\eta^k\nabla H_\gamma^\text{DR}(y^k)^\top p^k,
    \end{align*}where $\xi^k$ is on the segment between $y^k$ and $y^{k+1}$. If $\eta^k=1$, of course the sequence is decreasing. We then show the norm of gradient will converge to zero. According to \cite[Thm 2.1]{byrd1989tool}, there exist $\beta_1,\beta_2,\beta_3$ such that $$\left\|B_ks_k\right\|\leq\beta_1\left\|s_k\right\|,\,\,\, \beta_2\left\|s_k\right\|^2\leq s_k^\top B_ks_k\leq\beta_3\left\|s_k\right\|^2$$ hold for infinitely many $k$. Denoting the subsequence $\left\{k'\right\}$, we have$$\eta^{k'}\geq\min\left\{1,\frac{\delta (p^{k'})^\top B_{k'} p^{k'}}{\left\|p^{k'}\right\|}\right\}\geq\min\left\{1,\delta\beta_2\right\}.$$ $-B_{k'}s_{k'}=\eta^{k'}B_{k'}p^{k'}=\eta^{k'}\nabla H_\gamma^{\text{DR}}(y^{k'})$, so $\left\|\nabla H_\gamma^{\text{DR}}(y^{k'})\right\|\leq \left\|p^{k'}\right\|$, which means
    \begin{align*}
    &\nabla H_\gamma^{\text{DR}}(y^{k'})^\top p^{k'}=(p^{k'})^\top B_{k'}p^{k'}=\frac{1}{(\eta^{k'})^2}(s_{k'})^\top B_{k'}s_{k'}\\\geq &\frac{\beta_2}{(\eta^{k'})^2}\left\|s_{k'}\right\|^2=\beta_2\left\|p^{k'}\right\|^2\geq\beta_2\left\|\nabla H_\gamma^{\text{DR}}(y^{k'})\right\|^2.
    \end{align*}In the following, we use the trick of contradiction: if $\left\|\nabla H_\gamma^{\text{DR}}(y^k)\right\|\nrightarrow 0$, since $H_\gamma^{\text{DR}}(y^k)$ is a decreasing sequence, we have $\left\|\nabla H_\gamma^{\text{DR}}(y^{k'})\right\|\nrightarrow 0$. Because
    \begin{align*}
        H_\gamma^{\text{DR}}(y^{k'+1})-H_\gamma^{\text{DR}}(y^{k'})\leq(\delta-1)\eta^{k'}\beta_2\left\|\nabla H_\gamma^{\text{DR}}(y^{k'})\right\|^2,
    \end{align*}we have $H_\gamma^{\text{DR}}(y^{k'})\rightarrow -\infty$, which conflicts with the strong convexity of $H_\gamma^{\text{DR}}$. Therefore $\left\|\nabla H_\gamma^{\text{DR}}(y^k)\right\|\rightarrow 0$.\\We now establish the superlinear convergence, which mainly comes from the fact that a unit stepsize can be always chosen for large enough $k$. Following our algorithm, this is equivalent to showing that for large enough $k$, condition in line~12 holds while that in line~5 does not. From \cite[Thm 3.2]{byrd1989tool}, denoting by $\nabla^2_\star$ the Hessian at $y^\star$, we have $\underset{k\rightarrow \infty}{\lim}\frac{\left\|(B_k-\nabla^2_\star)s_k\right\|}{\left\|s_k\right\|}=0$ and $\left\|B_k\right\|,\left\|B_k^{-1}\right\|$ are uniformly bounded. There exists some $\xi^k$ between $y^k$ and $y^{k+1}$, such that $z_k=\nabla^2H_\gamma^{\text{DR}}(\xi^k)s_k$, then
    \begin{align*}
        \left\|B_ks_k-z_k\right\|\leq\left\|(B_k-\nabla^2_\star)s_k\right\|+\left\|(\nabla^2H_\gamma^{\text{DR}}(\xi^k)-\nabla_\star^2)s_k\right\|,
    \end{align*}and
    \begin{align*}
    \left\|(B_k-\nabla^2_\star)s_k\right\|\leq\left\|B_ks_k-z_k\right\|+\left\|(\nabla^2H_\gamma^{\text{DR}}(\xi^k)-\nabla_\star^2)s_k\right\|.
    \end{align*}By letting $\tilde{q}_k=\frac{\left\|B_ks_k-z_k\right\|}{\left\|s_k\right\|}+M(\left\|\nabla H_\gamma^{\text{DR}}(y^k)\right\|+\frac{1}{\gamma}\left\|\eta^kp^k\right\|)$, where $M:=\frac{L_3L_2^3}{L_1^3}\left(L_2+8\gamma\right)$
    is related to the Hessian continuity parameter $\frac{L_3L_2^3}{L_1^3}$ and strong convexity parameter $\min\left\{\frac{1}{8\gamma},\frac{1}{L_2+\gamma}\right\}$ obtained in Lemma~1, we have
        $\frac{\left\|(B_k-\nabla^2_\star)s_k\right\|}{\left\|s_k\right\|}\leq \tilde{q}_k\rightarrow 0$. Therefore, $q_k:=\frac{\left\|s_k-B_k^{-1}z_k\right\|}{\left\|B_k^{-1}s_k\right\|}+\left\|\nabla H_\gamma^{\text{DR}}(y^k)\right\|+\frac{1}{\gamma}\left\|\eta^kp^k\right\|\rightarrow 0$.
     From the boundedness of $B_k$ and $B_k^{-1}$, $\frac{(p^k)^\top\nabla H_\gamma^{\text{DR}}(y^k)}{\left\|p^k\right\|^2}$ has a uniform lower bound, which means that line~8 in Alg. 1 is eventually executed. Moreover, we have
    \begin{align*}
        \lvert(p^k)^\top\nabla H_\gamma^{\text{DR}}(y^k)-(p^k)^\top\nabla^2_\star p^k\rvert\leq \tilde{q}_k\left\|p^k\right\|^2.
    \end{align*}Since
    \begin{align*}
        &H_\gamma^{\text{DR}}(y^{k}+p^k)-H_\gamma^{\text{DR}}(y^{k})\\=&(p^k)^\top\nabla H_\gamma^{\text{DR}}(y^k)+\frac{1}{2}(p^k)^\top\nabla^2H_\gamma^{\text{DR}}(\xi_k) p^k\\\leq&\frac{1}{2}(p^k)^\top\nabla H_\gamma^{\text{DR}}(y^k)+2\tilde{q}_k\left\|p^k\right\|^2\\=&\sigma(p^k)^\top\nabla H_\gamma^{\text{DR}}(y^k)+2\tilde{q}_k\left\|p^k\right\|^2+\frac{1-2\sigma}{2}(p^k)^\top\nabla H_\gamma^{\text{DR}}(y^k),
    \end{align*}if $\tilde{q}_k\leq -\frac{(2\sigma-1)(p^k)^\top\nabla H_\gamma^{\text{DR}}(y^k)}{4\left\|p^k\right\|^2}$, a unit stepsize is accepted.
\end{proof} 
\section{Experiments}
\begin{figure}[t]
  \centering
  \begin{subfigure}[b]{0.49\linewidth}
    \includegraphics[width=\textwidth]{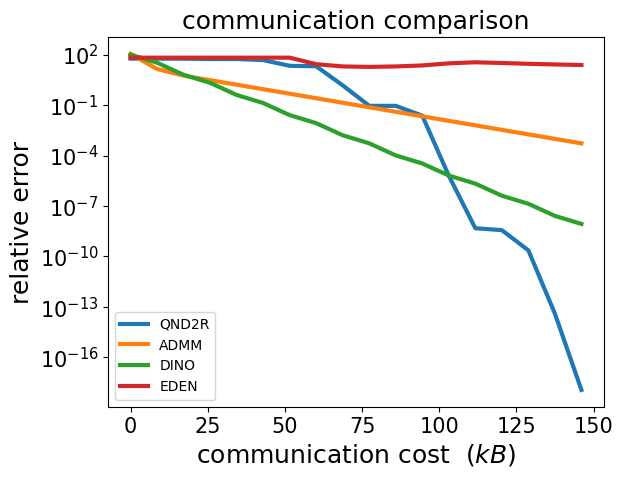}
    \caption{}
  \end{subfigure}
  \begin{subfigure}[b]{0.49\linewidth}
    \includegraphics[width=\textwidth]{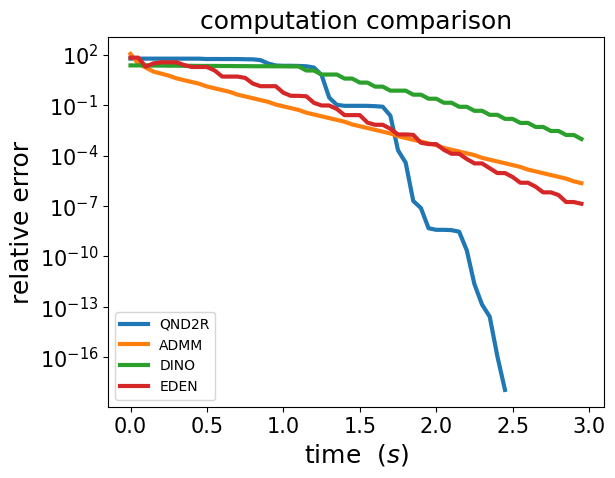}
    \caption{}
  \end{subfigure}\label{vs}
  \caption{Relative error reduction vs. (a) total communication cost and (b) total computation cost (run-time).}
  \label{fig1}
  \begin{subfigure}[b]{0.49\linewidth}
    \includegraphics[width=\textwidth]{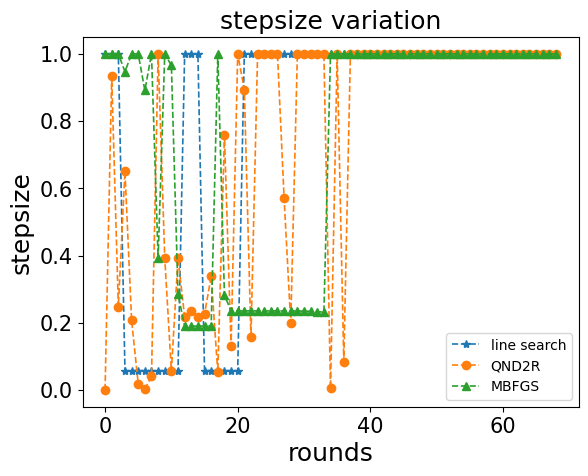}
    \caption{}
  \end{subfigure}
  \begin{subfigure}[b]{0.49\linewidth}
    \includegraphics[width=\textwidth]{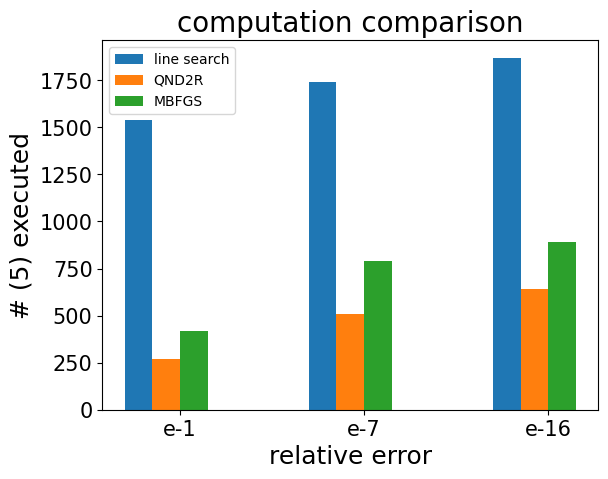}
    \caption{}
  \end{subfigure}
  \caption{Performance of different stepsize mechanism vs. stepsize variation (a) and computation cost (b).}
  \label{fig2}
\end{figure}
        We evaluate our algorithm on a distributed logistic regression problem:$$f_i(x) := \frac{1}{m_i}\sum_{j=1}^{m_i} \left[\ln\left(1+e^{w_j^Tx}\right) +(1-y_j)w_j^Tx\right],$$ where $m_i$ is the number of data points held by each agent and $\{w_j,y_j\}_{j=1}^{m_i}\subset \mathbb{R}^d\times\{0,1\}$ are labeled samples. We used data from \href{https://www.csie.ntu.edu.tw/~cjlin/libsvm/}{LIBSVM}. We take 5,000 data points with dimension $d=22$, and distribute them across $m=10$ agents before ordering by label. We compare with two other papers DINO \cite{crane2020dino}, EDEN \cite{liu2023communication} as well as standard ADMM since they avoid Hessian communication. We don't compare with papers using compression techniques like \cite{safaryan2022fednl,islamov2023distributed,agafonov2022flecs, dal2024shed} because they need compression ratio tuning and matrix decomposition computation. For our method, since the server doesn't contain a model, the relative error is set to be $\left\|\sum_{i=1}^m \nabla f_i(x_i^k)+\frac{\lambda}{m}x_i\right\|^2+\left\|x-\bar{x}\right\|^2$, where the first part represents the gradient and the second part represents the consensus error. For others, the relative error is set to be $\left\|\lambda x^k+\sum_{i=1}^m \nabla f_i(x^k)\right\|^2$. Fig.~1 show the comparison with  algorithms above in terms of communication (a) and computation (b) cost. Our method QND2R gives a clear superlinear convergence and outperforms baselines. Fig.~2 illustrate the effect of our stepsize selection mechanism. We compare with inexact line search and MBFGS used in \cite{zhang2005globally,liu2010convergence}. Fig.~2.a shows the stepsize variation, in which our method finally take a unit stepsize. As expected, QND2R keeps the unit stepsize at the latest stage, however, Fig.~2.b shows our method requires least computation for target accuracy since the computation in our setting can be measured by times (\ref{localcomp}) executed. To be specific, our method gives $35.7\%, 35.4\%, 28.1\%$ computation savings for the three target accuracy respectively compared with MBFGS.

\section{discussions}
\emph{Central model}:\\
The server does not contain a model in our implementation. If one is desirable, any single model or the averaged model, i.e., $\frac{1}{m}\sum_{j=1}^m x_j$ can be considered. The guarantee is because of Property~1: the consensus error and distance to optimality are upper bounded by the gradient norm of the envelope.\\

\emph{Regularizer choice}:\\
In (\ref{pro1}), we consider quadratic instead of general regularizers. This is because when establishing envelope in (\ref{DRE}), we directly use formula of conjugate function to give a concise algorithm as well as clear values for hyperparameters. For another regularizer, one can use the same analysis to obtain the coefficients and hyperparameters for it.\\

\emph{More on stepsize selection}:\\
In our setting, checking the decrease on the function value needs to solve (\ref{localcomp}), which means if unit stepsize does not work in one round, methods that only consider \ref{condition2} will waste the computation. Thus, we establish \ref{condition1} so that we can avoid such waste as much as possible. The effect of this choice is observed in Fig.~2. Although our method takes more conservative stepsize (Fig.~2.a), this is carried with much less computation effort, thus leading to substantial overall savings (Fig.~2.b).

\vspace{12pt}

\bibliographystyle{unsrt}
\bibliography{reference}

\begin{thebibliography}{10}

\bibitem{jakovetic2014fast}
Du{\v{s}}an Jakoveti{\'c}, Joao Xavier, and Jos{\'e}~MF Moura.
\newblock Fast distributed gradient methods.
\newblock {\em IEEE Transactions on Automatic Control}, 59(5):1131--1146, 2014.

\bibitem{zhang2022distributed}
J.~Zhang, K.~You, and T.~Ba{\c{s}}ar.
\newblock Distributed adaptive {N}ewton methods with global superlinear convergence.
\newblock {\em Automatica}, 138:110--156, 2022.

\bibitem{liu2023communication}
C.~Liu, L.~Chen, L.~Luo, and J.~C. Lui.
\newblock Communication efficient distributed {N}ewton method with fast convergence rates.
\newblock In {\em ACM SIGKDD Conference on Knowledge Discovery and Data Mining}, pages 1406--1416, 2023.

\bibitem{safaryan2022fednl}
M.~Safaryan, R.~Islamov, X.~Qian, and P.~Richtarik.
\newblock Fed{NL}: Making {N}ewton-type methods applicable to federated learning.
\newblock In {\em International Conference on Machine Learning}, pages 18959--19010, 2022.

\bibitem{islamov2023distributed}
R.~Islamov, X.~Qian, S.~Hanzely, M.~Safaryan, and P.~Richt{\'a}rik.
\newblock Distributed {N}ewton-type methods with communication compression and {B}ernoulli aggregation.
\newblock {\em Transactions on Machine Learning Research}, 2023.

\bibitem{agafonov2022flecs}
A.~Agafonov, D.~Kamzolov, R.~Tappenden, A.~Gasnikov, and M.~Tak{\'a}{\v{c}}.
\newblock F{LECS}: A federated learning second-order framework via compression and sketching.
\newblock {\em arXiv:2206.02009}, 2022.

\bibitem{dal2024shed}
N.~D.~Fabbro, S.~Dey, M.~Rossi, and L.~Schenato.
\newblock S{HED}: A {N}ewton-type algorithm for federated learning based on incremental {H}essian eigenvector sharing.
\newblock {\em Automatica}, 160:111460, 2024.

\bibitem{crane2020dino}
R.~Crane and F.~Roosta.
\newblock {DINO}: Distributed {N}ewton-type optimization method.
\newblock In {\em International Conference on Machine Learning}, pages 2174--2184, 2020.

\bibitem{eisen2019primal}
M.~Eisen, A.~Mokhtari, and A.~Ribeiro.
\newblock A primal-dual quasi-{N}ewton method for exact consensus optimization.
\newblock {\em IEEE Transactions on Signal Processing}, 67(23):5983--5997, 2019.

\bibitem{eisen2017decentralized}
M.~Eisen, A.~Mokhtari, and A.~Ribeiro.
\newblock Decentralized quasi-{N}ewton methods.
\newblock {\em IEEE Transactions on Signal Processing}, 65(10):2613--2628, 2017.

\bibitem{soori2020dave}
S.~Soori, K.~Mishchenko, A.~Mokhtari, M.~M. Dehnavi, and M.~Gurbuzbalaban.
\newblock D{A}ve-{QN}: A distributed averaged quasi-{N}ewton method with local superlinear convergence rate.
\newblock In {\em International Conference on Artificial Intelligence and Statistics}, pages 1965--1976, 2020.

\bibitem{eckstein1992douglas}
J.~Eckstein and D.~P. Bertsekas.
\newblock On the {D}ouglas—{R}achford splitting method and the proximal point algorithm for maximal monotone operators.
\newblock {\em Mathematical programming}, 55:293--318, 1992.

\bibitem{patrinos2014douglas}
P.~Patrinos, L.~Stella, and A.~Bemporad.
\newblock Douglas-{R}achford splitting: Complexity estimates and accelerated variants.
\newblock In {\em IEEE Conference on Decision and Control}, pages 4234--4239, 2014.

\bibitem{wright2006numerical}
S.~J. Wright.
\newblock Numerical optimization, 2006.

\bibitem{du2024distributed}
Y.~Du and K.~You.
\newblock Distributed adaptive greedy quasi-{N}ewton methods with explicit non-asymptotic convergence bounds.
\newblock {\em Automatica}, 165:111629, 2024.

\bibitem{polyak2020new}
B.~Polyak and A.~Tremba.
\newblock New versions of {N}ewton method: step-size choice, convergence domain and under-determined equations.
\newblock {\em Optimization Methods and Software}, 35(6):1272--1303, 2020.

\bibitem{zhang2005globally}
L.~Zhang.
\newblock A globally convergent {BFGS} method for nonconvex minimization without line searches.
\newblock {\em Optimization Methods and Software}, 20(6):737--747, 2005.

\bibitem{liu2010convergence}
L.~Liu, Z.~Wei, and X.~Wu.
\newblock The convergence of a new modified {BFGS} method without line searches for unconstrained optimization or complexity systems.
\newblock {\em Journal of Systems Science and Complexity}, 23(4):861--872, 2010.

\bibitem{zhou2018fenchel}
X.~Zhou.
\newblock On the {F}enchel duality between strong convexity and {L}ipschitz continuous gradient.
\newblock {\em arXiv:1803.06573}, 2018.

\bibitem{byrd1989tool}
R.~H. Byrd and J.~Nocedal.
\newblock A tool for the analysis of quasi-{N}ewton methods with application to unconstrained minimization.
\newblock {\em SIAM Journal on Numerical Analysis}, 26(3):727--739, 1989.

\end{thebibliography}
\end{document}